 \keywords{ primes, twin primes, gaps, prime constellations, Eratosthenes sieve,
primorial numbers}
\subjclass{11N05, 11A41, 11A07}
\newtheorem{theorem}{Theorem}[section]
\newtheorem{lemma}[theorem]{Lemma}
\newdimen\epsfxsize
\newdimen\epsfysize
\newcommand {\gap}     {\makebox[0.075 in]{}}
\newcommand {\fto}     {\longrightarrow}
\newcommand {\ord}[1]  {{#1}^{\rm th}}
\newcommand{\primeprod}[1] {{#1}^\#}
\newcommand{\N}[2]  {N_{{#2}}({#1})}
\newcommand{\Rat}[2]  {w_{{#2},{#1}}}  
\newcommand{\pgap}   {{\mathcal G}}
\newcommand{\Bi}[2]{\left( \begin{array}{c}{#1} \\ {#2} \end{array}\right)}
\newcommand{\lil}{\scriptstyle}
\begin{document}

\title{On small gaps among primes }

\date{8 Jan 2014- version 1.1}

\author{Fred B. Holt and Helgi Rudd}
\address{fbholt@uw.edu ;  4311-11th Ave NE \#500, Seattle, WA 98105;
48B York Place, Prahran, Australia 3181}

\begin{abstract}
A few years ago we identified a recursion that works directly with the gaps among
the generators in each stage of Eratosthenes sieve.  This recursion provides explicit
enumerations of sequences of gaps among the generators, which are known as 
constellations.

As the recursion proceeds, adjacent gaps within longer constellations are added together
to produce shorter constellations of the same sum.  These additions or closures correspond to
removing composite numbers that are divisible by the prime for that stage of Eratosthenes sieve.
Although we don't know where in the cycle of gaps a closure will occur, we can enumerate 
exactly how many copies of various constellations will survive each stage.

In this paper, we study these systems of constellations of a fixed sum.  Viewing them
as discrete dynamic systems, we are able to characterize the populations of constellations
for sums including the first few primorial numbers:  $2$, $6$, $30$.

Since the eigenvectors of the discrete dynamic system are independent of the prime -- that is,
independent of the stage of the sieve -- we can characterize the asymptotic behavior exactly.
In this way we can give exact ratios of the occurrences of the gap $2$ to
the occurrences of other small gaps for all stages of Eratosthenes sieve.
\end{abstract}

\maketitle

\section{Introduction}
We work with the prime numbers in ascending order, denoting the
$\ord{k}$ prime by $p_k$.  Accompanying the sequence of primes
is the sequence of gaps between consecutive primes.
We denote the gap between $p_k$ and $p_{k+1}$ by
$g_k=p_{k+1}-p_k.$
These sequences begin
$$
\begin{array}{rrrrrrc}
p_1=2, & p_2=3, & p_3=5, & p_4=7, & p_5=11, & p_6=13, & \ldots\\
g_1=1, & g_2=2, & g_3=2, & g_4=4, & g_5=2, & g_6=4, & \ldots
\end{array}
$$

A number $d$ is the {\em difference} between prime numbers if there are
two prime numbers, $p$ and $q$, such that $q-p=d$.  There are already
many interesting results and open questions about differences between
prime numbers; a seminal and inspirational work about differences
between primes is Hardy and Littlewood's 1923 paper \cite{HL}.

A number $g$ is a {\em gap} between prime numbers if it is the difference
between consecutive primes; that is, $p=p_i$ and $q=p_{i+1}$ and
$q-p=g$.
Differences of length $2$ or $4$ are also gaps; so open questions
like the Twin Prime Conjecture, that there are an infinite number
of gaps $g_k=2$, can be formulated as questions about differences
as well.

A {\em constellation among primes} \cite{Riesel} is a sequence of consecutive gaps
between prime numbers.  Let $s=c_1 c_2 \cdots c_k$ be a sequence of $k$
numbers.  Then $s$ is a constellation among primes if there exists a sequence of
$k+1$ consecutive prime numbers $p_i p_{i+1} \cdots p_{i+k}$ such
that for each $j=1,\ldots,k$, we have the gap $p_{i+j}-p_{i+j-1}=c_j$.  
Equivalently,
$s$ is a constellation if for some $i$ and all $j=1,\ldots,k$,
$c_j=g_{i+j}$.

We will write the constellations without marking a separation between
single-digit gaps.  For example, a constellation of $24$ denotes
a gap of $g_k=2$ followed immediately by a gap $g_{k+1}=4$.
For the small primes we will consider
explicitly, most of these gaps are single digits, and the separators introduce
a lot of visual clutter.  We use commas only to separate double-digit gaps in
the cycle.  For example, a constellation of $2,10,2$ denotes a gap of $2$
followed by a gap of $10$, followed by another gap of $2$.

In \cite{FBHgaps} we introduced a recursion that works directly on the gaps
among the generators in each stage of Eratosthenes sieve.
These are the generators of $Z \bmod \primeprod{p}$ in which $\primeprod{p}$
is the product of the prime numbers from $2$ through $p$, known as the {\em primorial} of $p$.
For a constellation $s$, this recursion enables us to enumerate exactly how
many copies of $s$ occur in the $\ord{k}$ stage of the sieve.  We denote this
number of copies of $s$ as $\N{p_k}{s}$.

For example, after the primes $2$, $3$, and $5$ and their multiples have been
removed, we have the cycle of gaps $\pgap(\primeprod{5}) = 64242462$.  This
cycle of $8$ gaps sums to $30$.  In this cycle, for the constellation $s=2$, we 
have $\N{5}{2}=3$.  For the constellation $s=242$, we have $\N{5}{242}=1$.
The cycle of gaps $\pgap(\primeprod{p})$ has $\phi(\primeprod{p})$ gaps that sum
to $\primeprod{p}$.

In \cite{HRest} we assumed that copies of a constellation were approximately uniformly
distributed within the cycle of gaps 
$\pgap( \primeprod{p})$, from which we could then estimate the numbers of 
these constellations that survive to occur as constellations
among prime numbers.  For a few select constellations we compared our
estimates to actual counts up through $10^{12}$.
For these constellations, our estimates in \cite{HRest} appear
to have the correct asymptotic behavior, but our estimates also seem to have a
systematic error correlated with the number of gaps in the constellation.

In this paper, we identify a discrete dynamic system that provides exact counts
of a gap and its driving terms, which are constellations that under successive 
closures produce the gap at later stages of the sieve.  These raw counts grow
superexponentially, and so to better understand their behavior we take the ratio
of a raw count to the number of gaps $g=2$ at each stage of the sieve.

For a gap $g$ that has driving terms of lengths $2 \le j \le J$, we form a vector
of initial values $\left. \bar{w}\right|_{p_{0}}$, whose $j^{\rm th}$ entry is the ratio
of the number of driving terms for $g$ of length $j$ in $\pgap(\primeprod{p_0})$
to the number of gaps $2$ in this cycle of gaps.
Recasting the discrete dynamic system to work with these ratios, we have
\begin{eqnarray*}
\left. \bar{w}\right|_{p_{k}} & = & \left. M_J \right|_{p_{k}} \cdot \left. \bar{w}\right|_{p_{k-1}} \\
 & = & M_J^k  \cdot \left. \bar{w}\right|_{p_{0}} 
\end{eqnarray*}
The matrix $M_J$ does not depend on the gap $g$.  It does depend on the prime $p_k$,
and we use the exponential notation $M_J^k$ to indicate the product of the $M$'s over
the indicated range of primes.

Although the matrix $M_J$ depends on the prime $p_k$, its eigenvectors do not.
We are therefore able to give a simple exact expression of the dynamic system that
reveals its asymptotic behavior.  We show that as $p_k \fto \infty$, the following ratios
describe the relative frequency of occurrence of gaps in Eratosthenes sieve:

\begin{center}
\begin{tabular}{rl}\hline
ratio $N_g / N_2$ : & gaps $g$ with this ratio \\ \hline
$1$ : & $2, \; 4, \; 8, \; 16, \; 32$  \\
$2$ : & $ 6, \; 12, \; 18, \; 24$ \\
$ 2.\bar{6}$ : & $ 30$
\end{tabular}
\end{center}

The ratios discussed in this paper give the exact values of the relative frequencies of various gaps and
constellations as compared to the number of gaps $2$ at each stage of Eratosthenes sieve.
As the sieving process continues, if the closures are at all fair, then these ratios should
also be good approximations to the relative occurrence of these gaps and constellations as gaps 
among primes.

\section{Recursion on Cycle of Gaps}
In the cycle of gaps, the first gap corresponds to the next prime.  In $\pgap(\primeprod{5})$
the first gap $g_1=6$, which is the gap between $1$ and the next prime, $7$.  The next
several gaps are actually gaps between prime numbers.  In the cycle of gaps
$\pgap(\primeprod{p_k})$, 
the gaps between $p_{k+1}$ and $p_{k+1}^2$ are in fact gaps between prime numbers.

There is a simple recursion which generates $\pgap(\primeprod{p_{k+1}})$ from
$\pgap(\primeprod{p_k})$.  This recursion and many of its properties are developed
in \cite{FBHgaps}.  We include only the concepts and results we need for 
developing the material in this paper.

The recursion on the cycle of gaps consists of three steps.
\begin{itemize}
\item[R1.] The next prime $p_{k+1} = g_1+1$, one more than the first gap;
\item[R2.] Concatenate $p_{k+1}$ copies of $\pgap(\primeprod{p_k})$;
\item[R3.] Add adjacent gaps as indicated by the elementwise product 
$p_{k+1}*\pgap(\primeprod{p_k})$:  let $i_1=1$ and add together $g_{i_1}+g_{i_1+1}$; then for 
$n=1,\ldots,\phi(N)$, add $g_{j}+g_{j+1}$ and let 
$i_{n+1}=j$ if the running sum of the concatenated gaps from $g_{i_n}$ to $g_j$ is
$p_{k+1}*g_{n}.$
\end{itemize}

\noindent{\bf Example: $\pgap(\primeprod{7})$.}
To illustrate this recursion,
we construct $\pgap(\primeprod{7})$ from $\pgap(\primeprod{5})=64242462$.

\begin{itemize}
\item[R1.] Identify the next prime, $p_{k+1}= g_1+1 = 7.$
\item[R2.] Concatenate seven copies of $\pgap(\primeprod{5})$:
$$\small 64242462 \; 64242462 \; 64242462 \; 64242462\; 64242462 \; 64242462 \;64242462$$
\item[R3.] Add together the gaps after the leading $6$ and 
thereafter after differences of 
$ 7*\pgap(\primeprod{5}) = 42, 28, 14, 28, 14, 28, 42, 14 $:
\begin{eqnarray*}
\pgap(\primeprod{7}) 
 &=&{\scriptstyle 
  6+\overbrace{\scriptstyle 424246264242}^{42}+
 \overbrace{\scriptstyle 4626424}^{28}+\overbrace{\scriptstyle 2462}^{14}+
 \overbrace{\scriptstyle 6424246}^{28}+\overbrace{\scriptstyle 2642}^{14}+
 \overbrace{\scriptstyle 4246264}^{28}+\overbrace{\scriptstyle 242462642424}^{42}+
 \overbrace{\scriptstyle 62 \; }^{14}} \\
 &=& {\scriptstyle 
 10, 2424626424 6 62642 6 46 8 42424 8 64 6 24626 6 4246264242, 10, 2}
\end{eqnarray*}
The final difference of $14$ wraps around the end of the cycle,
 from the addition preceding the final $6$ to the 
addition after the first $6$.
\end{itemize}

We summarize a few properties of the cycle of gaps $\pgap(\primeprod{p})$, as established in 
\cite{FBHgaps}.  The cycle of gaps ends in a $2$, and except for this final $2$, the cycle of gaps
is symmetric.  In constructing $\pgap(\primeprod{p_{k+1}})$,
each possible addition of adjacent gaps in $\pgap(\primeprod{p_k})$ occurs exactly once.

\subsection{Numbers of constellations}
The power of the recursion on the cycle of gaps is seen in the following theorem,
which enables us to calculate the number of occurrences of a constellation $s$ through
successive stages of Eratosthenes sieve.

\begin{theorem}\label{CountThm}
(from \cite{FBHgaps})
Let $s$ be a constellation of $j$ gaps in $\pgap(\primeprod{p_k})$, such that 
$j < p_{k+1}-1$ and  $\sigma(s) < 2p_{k+1}$.
Let $S$ be the set of all constellations $\bar{s}$ which would produce
$s$ upon one addition of adjacent gaps in $\bar{s}$.
Then the number $\N{p}{s}$ of occurrences of $s$ in $\pgap(\primeprod{p})$
 satisfies the recurrence
$$\N{p_{k+1}}{s} = (p_{k+1}-(j+1)) \cdot \N{p_k}{s}
 + \sum_{\bar{s} \in S} \N{p_k}{\bar{s}}$$
\end{theorem}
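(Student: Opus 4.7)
The plan is to analyze directly how the occurrences of $s$ in $\pgap(\primeprod{p_{k+1}})$ arise from the three-step recursion R1--R3. The key interpretation is that each closure in R3 corresponds to deleting a vertex (a generator of $\Z \bmod \primeprod{p_k}$ viewed inside $\Z \bmod \primeprod{p_{k+1}}$) that happens to be a multiple of $p_{k+1}$. Among the $p_{k+1}$ lifts $u, u + \primeprod{p_k}, \ldots, u + (p_{k+1}-1)\primeprod{p_k}$ of a given generator $u$, exactly one is divisible by $p_{k+1}$ (since $\gcd(\primeprod{p_k}, p_{k+1}) = 1$), so each vertex position of $\pgap(\primeprod{p_k})$ contributes exactly one closure, distributed across the $p_{k+1}$ concatenated copies produced in R2. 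After R2 but before R3 the concatenation already contains $p_{k+1} \cdot \N{p_k}{s}$ occurrences of $s$, one per copy, with the cyclic structure of $\pgap(\primeprod{p_k})$ making each copy boundary seamless.

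Next I would account for destruction. An occurrence of $s = c_1 c_2 \cdots c_j$ is anchored at $j+1$ consecutive vertices $v_0, v_1, \ldots, v_j$ with $v_i - v_0 = c_1 + \cdots + c_i$, and a closure at any one of these $j+1$ vertices --- an interior one that absorbs two gaps of $s$, or a boundary one that merges $c_1$ or $c_j$ with an outside gap --- destroys the occurrence. The central claim is that under the hypotheses these $j+1$ vertices lie in $j+1$ \emph{distinct} copies, which reduces to showing that the partial sums $0, c_1, c_1 + c_2, \ldots, \sigma(s)$ are distinct modulo $p_{k+1}$. Since each $c_i$ is even (gaps in $\pgap(\primeprod{p_k})$ are even for $k \ge 1$), any difference of partial sums is even, while $p_{k+1}$ is odd; combined with $\sigma(s) < 2 p_{k+1}$, this forbids two partial sums from differing by a multiple of $p_{k+1}$. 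The bound $j < p_{k+1} - 1$ then guarantees $j+1 \le p_{k+1}$, so there is room for $j+1$ distinct copies. Consequently exactly $j+1$ of the $p_{k+1}$ copies of $s$ are destroyed, leaving $(p_{k+1} - (j+1)) \cdot \N{p_k}{s}$ survivors.

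Finally I would count the occurrences of $s$ freshly created by a single closure inside a longer driver. For each $\bar{s} \in S$, the constellation $s$ is produced by merging two adjacent gaps of $\bar{s}$ at one interior vertex $u$; in the concatenation, this particular closure happens in precisely the one copy where $u$ lifts to a multiple of $p_{k+1}$. To ensure that no other vertex of $\bar{s}$ is also closed in that same copy --- which would either destroy the freshly formed $s$ or merge more gaps than intended --- I would reapply the partial-sum argument to $\bar{s}$: since $\sigma(\bar{s}) = \sigma(s) < 2p_{k+1}$ and $\bar{s}$ has $j+2 \le p_{k+1}$ vertices, all its vertices lie in distinct copies. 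Thus each occurrence of $\bar{s}$ in $\pgap(\primeprod{p_k})$ contributes exactly one new occurrence of $s$ in $\pgap(\primeprod{p_{k+1}})$, and summing over $\bar{s} \in S$ gives the second term. Adding the surviving and newly created counts yields the stated recurrence. The main obstacle --- and really the only technical content beyond bookkeeping --- is the distinctness-of-partial-sums claim: teasing out how the two hypotheses ($j < p_{k+1} - 1$ for pigeonholing copies, $\sigma(s) < 2 p_{k+1}$ together with even gaps and odd $p_{k+1}$ for the residues) deliver precisely the $j+1$ distinct closure copies is what promotes the intuitive sieve picture to a clean counting identity.
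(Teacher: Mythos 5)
The paper does not actually prove Theorem~\ref{CountThm}; it imports it from \cite{FBHgaps} without argument, so there is no in-paper proof to compare against. Your reconstruction is correct and is the natural sieve-theoretic argument one would expect the cited reference to contain. The load-bearing step is exactly the one you isolate: the $j+1$ (resp.\ $j+2$) vertices spanned by an occurrence of $s$ (resp.\ of a driving term $\bar{s}$) are closed in pairwise distinct copies of the concatenation, because two of them landing in the same copy would force a difference of partial sums to be a positive multiple of $p_{k+1}$, which is excluded by $\sigma(s)<2p_{k+1}$ together with the parity mismatch (even partial-sum differences versus odd $p_{k+1}$). That gives exactly $j+1$ destroyed copies per old occurrence of $s$ and exactly one freshly created copy per occurrence of each $\bar{s}\in S$, which is the recurrence.

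One point you should make explicit to close the accounting: you verify that survivors contribute $(p_{k+1}-(j+1))\N{p_k}{s}$ and that each $\bar{s}\in S$ contributes $\N{p_k}{\bar{s}}$, but you do not state why these two sources are \emph{exhaustive} --- i.e., why no occurrence of $s$ in $\pgap(\primeprod{p_{k+1}})$ can arise from two or more closures inside its span (which would require preimages of $j+2$ or more gaps, not lying in $S$). This follows from the very same lemma: any two removed generators are congruent $\bmod\ p_{k+1}$, hence differ by at least $p_{k+1}$, and an even difference below $2p_{k+1}$ cannot equal $p_{k+1}$; so a window of total width $\sigma(s)<2p_{k+1}$ contains at most one closure. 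Adding that one sentence turns your destruction/creation bookkeeping into a genuine partition of the occurrences of $s$ in the new cycle and completes the proof. Everything else --- the one-closure-per-residue-class picture, the seamlessness of copy boundaries under cyclic concatenation, and the role of $j<p_{k+1}-1$ in keeping the survival coefficient meaningful --- is handled correctly.
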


\begin{figure} 
\centering
\includegraphics[width=4in]{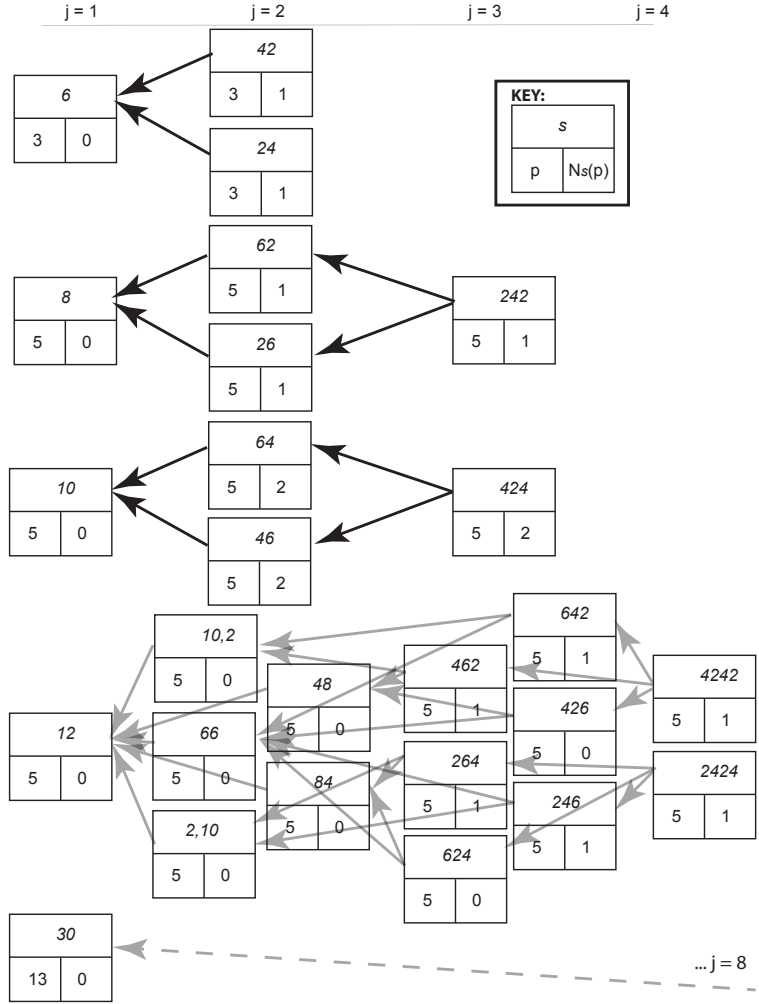}
\caption{\label{GrowthFig} This figure illustrates the initial conditions and driving terms for
calculating the numbers of copies of the gaps $6, \; 8, \; 10, \; 12$ in 
$\pgap(\primeprod{p})$.   The entries in this chart
 indicate the  constellation $s$, its length $j$;
the prime for which the constellation occurs in $\pgap(\primeprod{p})$ and which satisfies the
conditions of Theorem~\ref{CountThm}; and the number $N=\N{p}{s}$ of occurrences of the
constellation in $\pgap(\primeprod{p})$.  From these figures we can derive the recursive count
$\N{q}{s}$ for primes $q > p$.  For the gap $30$, the system of driving terms goes out to length
$j=8$.}
\end{figure}

\section{The dynamic system}

Figure~\ref{GrowthFig} illustrates the initial conditions for the gaps
$2$, $4$, $6$, $8$, $10$, and $12$, and their driving terms.  
Note that the initial conditions are not predicated on
when the constellations first appear but on the $\pgap(\primeprod{p})$ for which
the constellations satisfy the conditions of Theorem~\ref{CountThm}.

For larger gaps, these systems of driving terms become more unwieldy.  For a gap $g$, we don't need to
identify all of the individual constellations of length $j$ that sum to $g$.  All we need is a count of these
constellations.  So our diagram in Figure~\ref{GrowthFig} becomes simpler, 
as shown in Figure~\ref{SystemFig}.

\begin{figure}[t]
\centering
\includegraphics[width=5in]{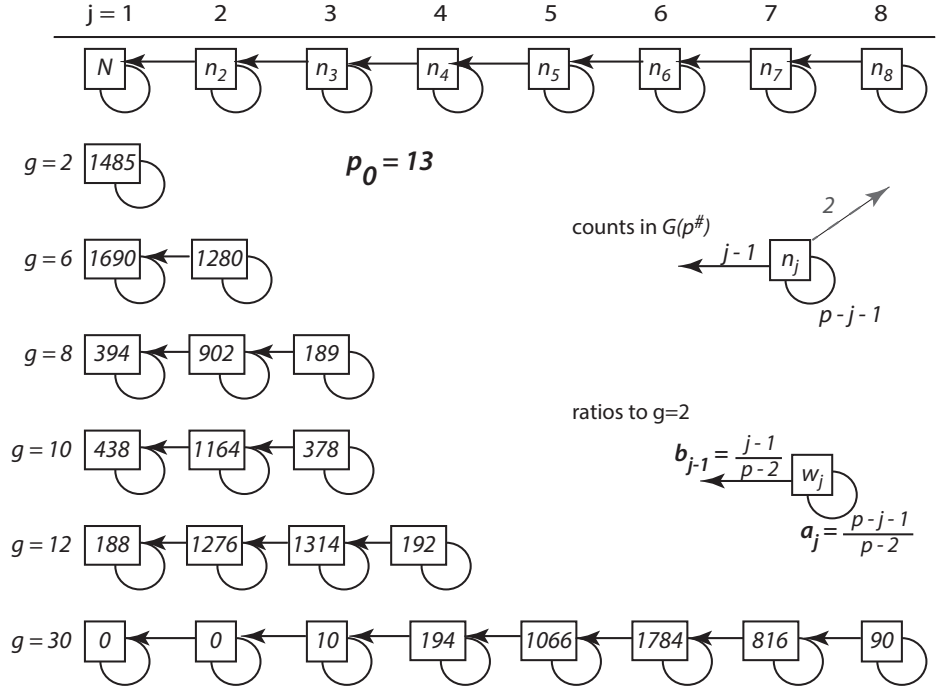}
\caption{\label{SystemFig} This figure illustrates the dynamic system of
Theorem~\ref{CountThm} through stages of the recursion for
$\pgap(\primeprod{p})$, using just the counts of gaps and their driving terms.
The action of the system at each stage of the recursion
is independent of the specific gap and its driving terms.  
Below the diagram for the system, we record the initial conditions
for a set of gaps at $p_0=13$.  From this information we can derive the recursive count
$\N{q}{s}$ for primes $q > p_0$.  Since the raw counts are superexponential, we take
the ratio of the count for each constellation to the simplest counts $\N{p}{2}=\N{p}{4}$. }
\end{figure}

Recall that $g=2$ has no driving terms, so
$$\N{p_{k}}{2} = (p_k - 2) \cdot \N{p_{k-1}}{2}.$$

Let $n_{s,j}(p)$ be the number of all constellations of length $j$ that either are copies of $s$ itself
(if $j$ equals the length of $s$) or are 
driving terms for $s$, in $\pgap(\primeprod{p})$.  As the recursion continues, these numbers
$n_{s,j}$ grow superexponentially by factors of $(p-j-1)$.  
To make the numbers and analysis manageable over many stages of the recursion, we
normalize by the number of $2$'s,  $\N{p}{2}=\N{p}{4}$.  We define
$$ \Rat{j}{s}(p) = n_{s,j}(p) / \N{p}{2}.$$

Anticipating our work with $g=30$ below, let us use $p_0=13$ for our initial conditions.
The prime $p=13$ is the first prime for which the conditions of Theorem~\ref{CountThm} 
are satisifed for $g=30$.  In $\pgap(\primeprod{13})$ we have the following
initial values.

\renewcommand\arraystretch{0.8}
\begin{center}
\begin{tabular}{|r|rrrrrrrrr|} \hline
gap & \multicolumn{9}{c|}{ $n_{g,j}(13)$: driving terms of length $j$ in $\pgap(\primeprod{13})$ }\\ [1 ex]
$g$ & $j=1$ & $2$ & $3$ & $4$ & $5$ & $6$ & $7$ & $8$ & $9$ \\ \hline 
 $\lil 2, \; 4$ & $\lil 1485$ & & & & & & & & \\
 $\lil 6$ & $\lil 1690$ & $\lil 1280$ & & & & & & & \\
 $\lil 8$ & $\lil 394$ & $\lil 902$ & $\lil 189$ & & & & & & \\
 $\lil 10$ & $\lil 438$ & $\lil 1164$ & $\lil 378$ & & & & & & \\
 $\lil 12$ & $\lil 188$ & $\lil 1276$ & $\lil 1314$ & $\lil 192$ & & & & & \\
 $\lil 14$ & $\lil 58$ & $\lil 536$ & $\lil 900$ & $\lil 288$ & & & & & \\
 $\lil 16$ & $\lil 12$ & $\lil 252$ & $\lil 750$ & $\lil 436$ & $\lil 35$ & & & &  \\
 $\lil 18$ & $\lil 8$ & $\lil 256$ & $\lil 1224$ & $\lil 1272$ & $\lil 210$ & & & & \\
 $\lil 20$ & $\lil 0$ & $\lil 24$ & $\lil 348$ & $\lil 960$ & $\lil 600$ & $\lil 48$ & & & \\
 $\lil 22$ & $\lil 2$ & $\lil 48$ & $\lil 312$ & $\lil 784$ & $\lil 504$ & & & & \\
 $\lil 24$ & $\lil 0$ & $\lil 20$ & $\lil 258$ & $\lil 928$ & $\lil 1260$ & $\lil 504$ & & & \\
 $\lil 26$ & $\lil 0$ & $\lil 2$ & $\lil 40$ & $\lil 322$ & $\lil 724$ & $\lil 448$ & $\lil 84$ & & \\
 $\lil 28$ & $\lil 0$ & $\lil 0$ & $\lil 36$ & $\lil 344$ & $\lil 794$ & $\lil 528$ & $\lil 80$ & & \\
 $\lil 30$ & $\lil 0$ & $\lil 0$ & $\lil 10$ & $\lil 194$ & $\lil 1066$ & $\lil 1784$ & $\lil 816$ & $\lil 90$ & \\
 $\lil 32$ & $\lil 0$ & $\lil 0$ & $\lil 0$ & $\lil 12$ & $\lil 200$ & $\lil 558$ & $\lil 523$ & $\lil 172$ & $\lil 20$ \\ \hline
 \end{tabular}
 \end{center}
\renewcommand\arraystretch{1}

For $g=6$ there are driving terms of length $j=2$, so we have a $2$-dimensional system. 
\begin{eqnarray*}
\left[ \begin{array}{c} \Rat{1}{6} \\ \Rat{2}{6} \end{array} \right]_{p_k}
& = & \left[ \begin{array}{cc}
  \frac{p_k-2}{p_k-2} & \frac{1}{p_k-2} \\ & \\ 0 & \frac{p_k-3}{p_k-2} \end{array} \right]
  \cdot  \left[ \begin{array}{c} \Rat{1}{6} \\ \Rat{2}{6} \end{array} \right] _{p_{k-1}}  \nonumber \\
 & = & \left[ \begin{array}{cc}
 1 & b_1 \\ 0 & a_2 \end{array} \right]
  \cdot  \left[ \begin{array}{c} \Rat{1}{6} \\ \Rat{2}{6} \end{array} \right]_{p_{k-1}} 
\end{eqnarray*} 

We have the system matrix
$$ M_2 = \left[ \begin{array}{cc}
 1 & b_1 \\ 0 & a_2 \end{array} \right] $$ 
with $b_1 = b_1(p)  = \frac{1}{p-2}$ and $a_2 = a_2(p) = \frac{p-3}{p-2}$.  We will often suppress
the explicit dependence of $a_i$ and $b_i$ on the prime $p$, but a consequence is that multiplication
among these parameters does not commute.  

Formulated in this way, we can use common methods
of analysis for dynamic systems, except that the values of the matrix entries depend on the
progression of primes.
Again we caution that we have qualified the exponential notation, to mean the product of
a parameter over the appropriate sequence of prime numbers. Let 
\begin{eqnarray*}
\left[ \begin{array}{c} \Rat{1}{6} \\ \Rat{2}{6} \end{array} \right]_{p_k}
& = & \left. M_2 \right|_{p_k}
  \cdot  \left[ \begin{array}{c} \Rat{1}{6} \\ \Rat{2}{6} \end{array} \right]_{p_{k-1}}  \\
 & = & M_2^k
  \cdot  \left[ \begin{array}{c} \Rat{1}{6} \\ \Rat{2}{6} \end{array} \right]_{p_0} 
\end{eqnarray*} 
To understand the relative occurrence of $6$'s to $2$'s in the large, we examine the
matrices $M_2^k$.
$$ M_2^k = \left[ \begin{array}{cc}
 1 & \beta_{12}^{(k)} \\ 0 & a_2^k \end{array} \right]$$
 with initial values $\beta_{12} = b_1(17) = \frac{1}{15}$, $a_2 = \frac{14}{15}$, and powers
\begin{eqnarray*}
 \beta_{12}^{(k)}  &=&  1 \cdot \beta_{12}^{(k-1)} + \frac{1}{p_k - 2} \cdot a_2^{k-1} \\
{\rm and} \makebox[0.2 in]{}
 a_2^k  &=&  \frac{p_k-3}{p_k-2} \gap a_2^{k-1}  =  \prod_{q=p_1}^{p_k} \frac{q-3}{q-2}.
\end{eqnarray*}
 
 The limit of the ratios $\Rat{j}{6}$ is determined by the limit of 
 products of the system matrix
\begin{equation*}
M_2^\infty  =  \left[ \begin{array}{cc}
 1 & \beta_{12}^{(\infty)} \\ 0 & a_2^\infty \end{array} \right]  \; \;
    = \; \left[ \begin{array}{cc}
 1 & \lim_{k \rightarrow \infty} \beta_{12}^{(k)} \\
  0 & \lim_{k \rightarrow \infty} \prod_{q=17}^{p_k} \frac{q-3}{q-2}\end{array} \right].
\end{equation*}
  
For $g=8$ and $g=10$, there are driving terms up to length $3$, so we have a $3$-dimensional system. 
The system matrix is
$$ M_3 = \left[ \begin{array}{ccc}
 1 & b_1 & 0 \\ 0 & a_2 & b_2 \\ 0 & 0 & a_3 \end{array} \right] $$ 
with $b_1$ and $a_2$ as before in $M_2$, $b_2 = b_2(p)  = \frac{2}{p-2}$ 
and $a_3 = a_3(p) = \frac{p-4}{p-2}$.

Powers of $M_3$ will be upper triangular
$$ M_3^k = \left[ \begin{array}{ccc}
1 & \beta_{12}^{(k)} & \beta_{13}^{(k)} \\
0 & a_2^k & \beta_{23}^{(k)} \\
0 & 0 & a_3^k \end{array} \right] = \left. M_3 \right|_{p_k} \cdot M_3^{k-1},
$$
with the following recursive definitions:
\begin{eqnarray}
a_2^k & = & \prod_{q=17}^{p_k} \frac{q-3}{q-2} \label{EqA2} \\
a_3^k & = & \prod_{q=17}^{p_k} \frac{q-4}{q-2} \label{EqA3} \\
\nonumber
\beta_{12}^{(k)} & = & 1\cdot \beta_{12}^{(k-1)} + \frac{1}{p_k-2}\cdot a_2^{k-1} \\ 
\nonumber
\beta_{23}^{(k)} & = & \frac{p_k-3}{p_k-2} \cdot \beta_{23}^{(k-1)} + \frac{2}{p_k-2}\cdot a_3^{k-1} \\ \nonumber
\beta_{13}^{(k)} & = & 1 \cdot \beta_{13}^{(k-1)} + \frac{1}{p_k-2} \cdot \beta_{23}^{(k-1)}
\end{eqnarray}

\begin{figure}[b] 
\centering
\includegraphics[width=5in]{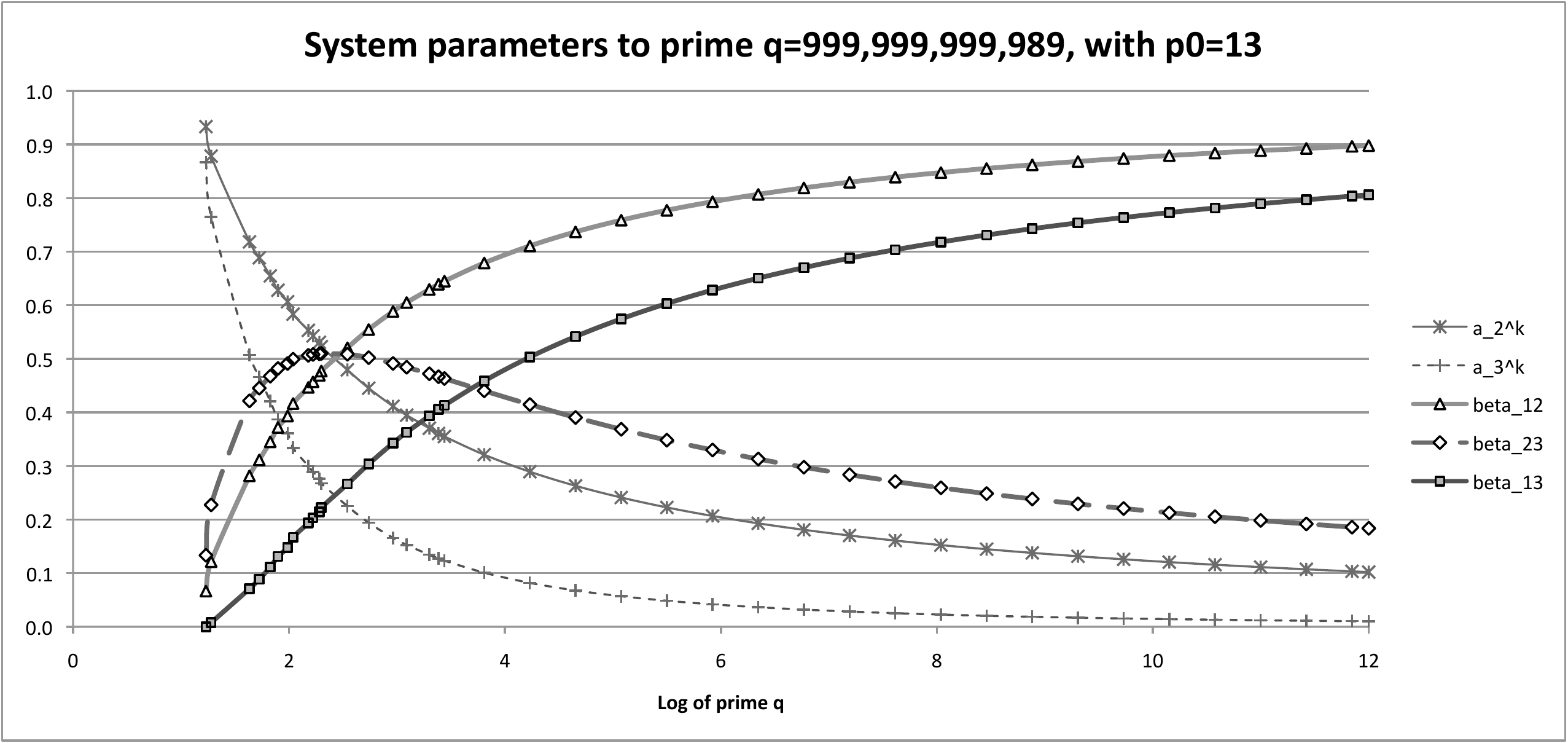}
\caption{\label{ParamFig} This figure illustrates the values of the system parameters
for $M_3^k$ as the
value of $p_k$ runs from $17$ to $999,999,999,989$.
With the parameters $\beta_{12}^{(k)}$ and $\beta_{13}^{(k)}$, we can calculate
the ratios $\Rat{j}{g}$ for the gaps  $6, \; 8, \; 10$ up through 
$\pgap(\primeprod{999,999,999,989})$.   }
\end{figure}

Since we will later be comparing these values to $\Rat{j}{30}$, we calculate initial 
conditions using $p_0=13$.  We can then use calculations of the system parameters 
in $M_3^k$ to obtain the ratios $\Rat{j}{8}$ and $\Rat{j}{10}$ for large primes.
With $p_0=13$, we have calculated the system parameters through
$p_k = \hat{p} = 999,999,999,989$.  See Figure~\ref{ParamFig}.  
For this value of $p_k$, we calculate the following values.  

\noindent
\begin{tabular}{rrrr} \hline
\multicolumn{4}{l}{For $p_0=13$ }  \\
 $\underline{g}$ & $\underline{\Rat{1}{g}(13)}$ & 
 $\underline{\Rat{2}{g}(13)}$ & $\underline{\Rat{3}{g}(13)}$ \\ 
$6$ & $1.13804714$ & $0.86195286$ & $0$ \\
$8$ & $0.26531987$ & $0.60740741$ & $0.12727273$ \\
 $10$ & $0.29494949$ & $0.78383838$ & $0.25454545$ \\ \hline
\multicolumn{4}{l}{For $p_k=\hat{p}=999,999,999,989$}  \\
  & $a_1^k=1$ & $\beta_{12}^k=0.89793248$ &  $\beta_{13}^k=0.80606493$  \\ 
  & & & \\
 & $\mathbf{\Rat{1}{6}(\hat{p}) = 1.91202}$  &
 $\mathbf{\Rat{1}{8}(\hat{p}) = 0.91332}$ &
  $\mathbf{\Rat{1}{10}(\hat{p}) = 1.20396}$  \\ \hline
\end{tabular} 

This data tells us that in $\pgap(\primeprod{999,999,999,989})$, which covers the interval 
$\hat{p}=999,999,999,989$ to 
$\primeprod{\hat{p}} \approx 10^{434294060804}$, the ratio of gaps $g=6$ to gaps $g=2$ is
$\Rat{1}{6}(\hat{p})= 1.91202$.  The number of gaps $g=10$ has surpassed the gaps $g=2$
with a ratio of $\Rat{1}{10}(\hat{p})=1.20396$, but the gaps $g=8$ still lag the number of 
gaps $g=2$ with a ratio $\Rat{1}{8}(\hat{p})=0.91332$.

\section{General system}
The general form of this dynamic system, for gaps or constellations with driving terms 
of length $j \le J$ is
\begin{eqnarray*}
\left[ \begin{array}{c}
\Rat{1}{g} \\ \vdots \\ \Rat{J}{g} \end{array} \right]_{p_k}
& = & \left[ \begin{array}{cccccc}
1 & b_1 & 0 & \cdots & & 0 \\
0  & a_2 & b_2 & \ddots &  & 0 \\
 & 0 & a_3 & b_3 & \ddots & 0 \\
  \vdots & & \ddots & \ddots & \ddots & 0 \\
  0 & & \cdots & & a_{J-1} & b_{J-1} \\
  0 & & \cdots & & 0 & a_J \end{array} \right]_{p_k} \cdot
  \left[ \begin{array}{c}
\Rat{1}{g} \\ \vdots \\ \Rat{J}{g} \end{array} \right]_{p_{k-1}} \\
 & = & \left. M_J  \right|_{p_k} \cdot
  \left[ \begin{array}{c}
\Rat{1}{g} \\ \vdots \\ \Rat{J}{g} \end{array} \right]_{p_{k-1}} 
\; =   M_J^k  \cdot
  \left[ \begin{array}{c}
\Rat{1}{g} \\ \vdots \\ \Rat{J}{g} \end{array} \right]_{p_0} 
\end{eqnarray*}

Each $\Rat{j}{g}(p_k)$ is the ratio of the number of driving terms of length $j$ for the gap $g$,
to the number of gaps $2$ in the cycle of gaps $\pgap(\primeprod{p_k})$.  In particular, 
$\Rat{1}{g}(p_k)$ is the ratio of the number of gaps $g$ to gaps $2$ at this stage of the recursion.

$M_J$ is a banded matrix that depends on the iteration $p_k$ but {\em not} on the gap $g$.  
\begin{eqnarray}\label{Eqab}
b_j &  = & \frac{j}{p-2} \\ \nonumber
a_j & =  & \frac{p-j-1}{p-2}
\end{eqnarray}
While $M_J$ is banded, $M_J^k$ becomes upper triangular. 
\begin{eqnarray*}
M_J^k & = & \left[ 
\begin{array}{ccccc}
1 & \beta_{12}^{(k)} & \beta_{13}^{(k)} & \cdots & \beta_{1J}^{(k)} \\
0 & a_2^k & \beta_{23}^{(k)} & \cdots & \beta_{2J}^{(k)} \\
\vdots & & \ddots & \ddots & \vdots \\
0 & & & a_{J-1}^k & \beta_{J-1,J}^{(k)} \\
0 & \cdots & & 0 & a_J^k \end{array} \right]
\end{eqnarray*}
with
\begin{equation*}
\beta_{ij}^{(k)}  = \left\{
\begin{array}{lcl}
a_i \cdot \beta_{ij}^{(k-1)} + b_i \cdot a_j^{k-1} & {\rm if} & j = i+1 \\
& & \\
a_i \cdot \beta_{ij}^{(k-1)} + b_i \cdot \beta_{i+1,j}^{(k-1)} & {\rm if} & j > i+1 \end{array} \right.
\end{equation*}
Note that the multiplication on the right-hand side does not commute, since the value of each
factor depends on the respective value of the prime $p$ as indicated by its position in the
product.

$M_J^k$ applies to all constellations whose driving terms have length $j \le J$; and we 
continue to use the exponential notation to denote the product over the sequence of primes from
$p_1$ to $p_k$: e.g.
$$ M_J^k =  \left. M_J\right|_{p_k} \cdot \left. M_J\right|_{p_{k-1}} \cdots \left. M_J\right|_{p_1}.
$$ 
With $M_J^k$ we can calculate the ratios $\Rat{j}{g}(p_k)$
for the complete system of driving terms, relative to the
population of the gap $2$, for the cycle of gaps $\pgap(\primeprod{p_k})$ (here, $p_k$ is the $k^{\rm th}$
prime after $p_0$).  With $J=3$ we calculated above the ratios for $g=6, \; 8, \; 10.$
For $g=12$ we need $J=4$, and for $g=30$, we need $J=8$.

Fortunately, we can completely describe the eigenstructure for $\left. M_J\right|_{p}$,
and even better -- {\em the eigenvectors for $M_J$ do not depend on the prime $p$}.  This means
that we can use the eigenstructure to describe the behavior of this iterative system as 
$k \fto \infty$.

\subsection{Eigenstructure of $M_J$}
We list the eigenvalues, the left eigenvectors and the right eigenvectors for $M_J$, writing these
in the product form
$$ M_J  =  R \cdot \Lambda \cdot L $$
with $LR = I$.

With $a_j$ and $b_j$ as defined in Equation \ref{Eqab}, for $J=4$ we have
\begin{eqnarray*}
M_4 & = & \left[ \begin{array}{cccc}
1 & b_1 & 0 & 0 \\
0 & a_2 & b_2 & 0 \\
0 & 0 & a_3 & b_3 \\
0 & 0 & 0 & a_4 \end{array}\right] \\
& = & \left[ \begin{array}{rrrr}
1 & -1 & 1 & -1 \\
0 & 1 & -2 & 3 \\
0 & 0 & 1 & -3 \\
0 & 0 & 0 & 1 \end{array}\right] 
\cdot \left[ \begin{array}{cccc}
1 & 0 & 0 & 0 \\
0 & a_2 & 0 & 0 \\
0 & 0 & a_3 & 0 \\
0 & 0 & 0 & a_4 \end{array}\right]
\cdot \left[ \begin{array}{cccc}
1 & 1 & 1 & 1 \\
0 & 1 & 2 & 3 \\
0 & 0 & 1 & 3 \\
0 & 0 & 0 & 1 \end{array}\right]
\end{eqnarray*}

Note that while the eigenvalues of $M_4$ depend on the prime $p$ (through the $a_j$), 
the eigenvectors do not.
Thus the matrix $M_4^k$ can be written
\begin{eqnarray*}
M_4^k & = &  R \Lambda^k L \\
& = & \left[ \begin{array}{rrrr}
1 & -1 & 1 & -1 \\
0 & 1 & -2 & 3 \\
0 & 0 & 1 & -3 \\
0 & 0 & 0 & 1 \end{array}\right] 
\cdot \left[ \begin{array}{cccc}
1 & 0 & 0 & 0 \\
0 & a_2^k & 0 & 0 \\
0 & 0 & a_3^k & 0 \\
0 & 0 & 0 & a_4^k \end{array}\right]
\cdot \left[ \begin{array}{cccc}
1 & 1 & 1 & 1 \\
0 & 1 & 2 & 3 \\
0 & 0 & 1 & 3 \\
0 & 0 & 0 & 1 \end{array}\right]
\end{eqnarray*}

With $J=4$ we can calculate the ratio of the gap $g=12$ to the gap $g=2$ in the cycle of gaps.
For initial conditions at $p_0 = 13$, we have
\begin{equation*}\begin{array}{lll}
\N{13}{2} = 1485 & \N{13}{12} = 188 & \Rat{1}{12}(13) = 188 / 1485 \\ 
 & n_{12,2}(13) = 1276 & \Rat{2}{12}(13) = 1276 / 1485 \\ 
 & n_{12,3}(13) = 1314 & \Rat{3}{12}(13) = 1314 / 1485 \\ 
 & n_{12,4}(13) = 192 & \Rat{4}{12}(13) = 192 / 1485 \\ 
\end{array}
\end{equation*}

To determine the ratios after $k$ iterations of the recursion, we apply $M_4^k$.
\begin{eqnarray*}
M_4^k \cdot \left. \bar{w} \right|_{13} & = &  \left[ \begin{array}{rrrr}
1 & -1 & 1 & -1 \\
0 & 1 & -2 & 3 \\
0 & 0 & 1 & -3 \\
0 & 0 & 0 & 1 \end{array}\right] 
\cdot \left[ \begin{array}{cccc}
1 & 0 & 0 & 0 \\
0 & a_2^k & 0 & 0 \\
0 & 0 & a_3^k & 0 \\
0 & 0 & 0 & a_4^k \end{array}\right]
\cdot \left[ \begin{array}{cccc}
1 & 1 & 1 & 1 \\
0 & 1 & 2 & 3 \\
0 & 0 & 1 & 3 \\
0 & 0 & 0 & 1 \end{array}\right] \cdot  
\left[ {\small \begin{array}{l} 
188 / 1485 \\ 1276 / 1485 \\ 1314 / 1485 \\ 192 / 1485
\end{array}} \right] \\ 
& = & \left[ \begin{array}{rrrr}
1 & -1 & 1 & -1 \\
0 & 1 & -2 & 3 \\
0 & 0 & 1 & -3 \\
0 & 0 & 0 & 1 \end{array}\right] 
\cdot  \left[ \begin{array}{l}
2 \\ {\small 4480 / 1485} \;  a_2^k \\ {\small 1890/ 1485} \; a_3^k \\ 
{\small 192 / 1485}  \; a_4^k \end{array}\right] 
\end{eqnarray*}

Focusing just on the ratio $\Rat{1}{12}$ of the occurrences of gap $g=12$ to $g=2$, we 
see that 
$$ \Rat{1}{12}(p_k) = 2 - \frac{4480}{1485} \;  a_2^k + \frac{1890}{1485} \; a_3^k - \frac{192}{1485} \; a_4^k 
$$
which converges to $\Rat{1}{12}(p_\infty) = 2$ as rapidly as $a_2^k \fto 0$.  In Figure~\ref{ParamFig}
we observe that $a_2^k$ still has a value around $0.1$ for $p_k \sim 10^{12}$.

For the general system $M_J$, the upper triangular entries of $R$ and $L$ are binomial coefficients,
with those in $R$ of alternating sign; and the eigenvalues are the $a_j$.
\begin{eqnarray*}
R_{ij} & = & \left\{ \begin{array}{cl}
(-1)^{i+j}\Bi{j-1}{i-1} & {\rm if} \; i \le j \\ & \\
0 & {\rm if} \; i > j \end{array} \right. \\
& & \\
\Lambda & = & {\rm diag}(1, a_2, \ldots, a_J) \\
 & & \\
L_{ij}  & = & \left\{ \begin{array}{ll}
\Bi{j-1}{i-1} & {\rm if} \; i \le j \\ & \\
0 & {\rm if} \; i > j \end{array} \right. 
\end{eqnarray*}

For any vector $\bar{w}$, multiplication by the left eigenvectors 
(the rows of $L$) yields the 
coefficients for expressing this vector of initial conditions over the basis given by the 
right eigenvectors (the columns of $R$):
$$ \bar{w} =  (L_{1 \cdot} \bar{w}) R_{\cdot 1} + \cdots + (L_{J \cdot} \bar{w}) R_{\cdot J}
$$

\begin{lemma}\label{AsymLemma}
For any gap $g$ with initial ratios $\bar{w}_0$, the ratio of occurrences of this gap $g$
to occurrences of the gap $2$ in $\pgap(\primeprod{p})$ as $p \fto \infty$ converges to
the sum of the initial ratios across the gap and all its driving terms:
$$
\Rat{1}{g}(\infty) = L_{1 \cdot} \bar{w}_0 = \sum_j \left. \Rat{j}{g} \right|_{p_0}.
$$
\end{lemma}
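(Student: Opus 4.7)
The plan is to pass to the limit in the spectral decomposition $M_J = R \Lambda L$ and read off the first component. Since the eigenvectors $R$ and $L$ are prime-independent, the product $M_J^k = R \Lambda^k L$ is legitimate in spite of the noncommuting scalars; only the diagonal factor $\Lambda^k = \operatorname{diag}(1, a_2^k, \ldots, a_J^k)$ collects the dependence on the sequence of primes. The asymptotic behavior is therefore controlled entirely by the powers $a_j^k$.

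The first key step is to show that $a_j^k \to 0$ as $k \to \infty$ for every $j \geq 2$. From the definition
$$ a_j^k \; = \; \prod_{q = p_1}^{p_k} \frac{q-j-1}{q-2} \; = \; \prod_{q = p_1}^{p_k} \left(1 - \frac{j-1}{q-2}\right), $$
so $\log a_j^k = \sum_q \log(1 - (j-1)/(q-2))$. Since $j-1 \geq 1$ and $\sum_q 1/q$ diverges over primes (Mertens), this logarithm diverges to $-\infty$, hence $a_j^k \to 0$. The only surviving eigenvalue in the limit is $a_1 = 1$, so
$$ \Lambda^\infty \; = \; \operatorname{diag}(1, 0, \ldots, 0) \; = \; e_1 e_1^T. $$

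The second step is to evaluate $R e_1 e_1^T L$. From $R_{ij} = (-1)^{i+j}\binom{j-1}{i-1}$, the first column $R_{\cdot 1}$ has entries $(-1)^{i+1}\binom{0}{i-1}$, which is $1$ for $i=1$ and $0$ otherwise; so $R e_1 = e_1$. From $L_{ij} = \binom{j-1}{i-1}$, the first row has entries $\binom{j-1}{0} = 1$ for every $j$; so $e_1^T L = (1, 1, \ldots, 1)$. Combining,
$$ M_J^\infty \; = \; R \, \Lambda^\infty \, L \; = \; e_1 \, (1, 1, \ldots, 1). $$

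The third step is immediate: applying this rank-one limit to the initial vector $\bar{w}_0$ gives
$$ \bar{w}|_{p_\infty} \; = \; e_1 \cdot \sum_{j=1}^{J} \left.\Rat{j}{g}\right|_{p_0}, $$
and the first component is exactly $\Rat{1}{g}(\infty) = L_{1 \cdot} \bar{w}_0 = \sum_j \Rat{j}{g}|_{p_0}$. The main (and really only) obstacle is the divergence argument for $a_j^k$, and even that reduces to the classical fact that $\sum 1/p$ diverges; the rest is a direct computation with binomial coefficients. A small cosmetic point is that one should verify the decomposition $M_J^k = R \Lambda^k L$ passes through the noncommutative product correctly, but this is fine because $R$ and $L$ are constant: $R \Lambda|_{p_k} L \cdot R \Lambda|_{p_{k-1}} L = R \Lambda|_{p_k} \Lambda|_{p_{k-1}} L$ by $LR = I$, and induction gives the product form with each $a_j^k$ interpreted as the telescoped product over primes, as is already the convention in the paper.
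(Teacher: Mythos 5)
Your proof is correct and follows essentially the same route as the paper's: both rest on the spectral decomposition $M_J^k = R\Lambda^k L$ with prime-independent eigenvectors, the observations that $R_{\cdot 1}=e_1$ and $L_{1\cdot}=[1\cdots 1]$, and the decay $a_j^k \to 0$ for $j\ge 2$. You add two details the paper leaves implicit --- the Mertens-type divergence argument showing $a_j^k\to 0$, and the check that $LR=I$ lets the telescoped product collapse despite the prime-dependent factors --- both of which strengthen rather than alter the argument.
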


\begin{proof}
Let $g$ have driving terms up to length $J$.  Then the ratios $\left. \bar{w} \right|_{p}$ 
are given by  the iterative linear system
$$
\left. \bar{w} \right|_{p_k} = M_J^k \cdot  \bar{w}_0.
$$
From the eigenstructure of $M_J$, we have
\begin{eqnarray}
\bar{w}_0 &=& (L_1 \bar{w}_0)R_1 + (L_2 \bar{w}_0)R_2 + \cdots + (L_J \bar{w}_0)R_J ,
\nonumber \\
{\rm and \; so} \gap \gap & & \nonumber \\ 
M_J^k \bar{w}_0 &=& 
  (L_1 \bar{w}_0)R_1 + a_2^k (L_2 \bar{w}_0)R_2  + \cdots + a_J^k (L_J \bar{w}_0)R_J. 
\end{eqnarray}
We note that $L_{1 \cdot} = [1 \cdots 1]$, $\lambda_1 = 1$, and $R_{\cdot 1} = e_1$;
that the other eigenvalues $a_j^k \fto 0$ with $a_j^k > a_{j+1}^k$.   Thus as $k \fto \infty$
the terms on the righthand side decay to $0$ except for the first term, establishing the result.
\end{proof}

With Lemma~\ref{AsymLemma} and the initial values in $\pgap(\primeprod{13})$ 
tabulated above, we can calculate the asymptotic ratios
of the occurrences of the gaps $g = 6, 8, \ldots, 32$ to the gap $g=2$, and we provide the
intermediate values at $\hat{p}=999,999,999,989$ to give a sense of the rate of convergence.

\renewcommand\arraystretch{1.2}
\begin{center}
\begin{tabular}{ll} \hline
\multicolumn{2}{|c|}{Values of $a_j^k$ at $\hat{p}=999,999,999,989$} \\ \hline
 &
$a_2^k = 0.102067517997789430000$ \\
 & $a_3^k = 0.0101999689756664110000$ \\
$ a_j^k = \prod_{q=17}^{\hat{p}} \frac{q-j-1}{q-2} $ & $a_4^k = 0.00099592269918294960000$ \\
 & $a_5^k = 0.000094770935314020220000$ \\
 & $a_6^k = 0.00000876214163461868090000 $ \\
 & $a_7^k = 0.000000784081204999455720000 $ \\
 & $a_8^k =  0.000000067575616112121770000$ \\
 & $a_9^k =  0.00000000557283548473588330000$ \\ \hline
\end{tabular}
\end{center}
\renewcommand\arraystretch{1}

From these values, we see the decay of the $a_j^k$ toward $0$, but $a_2^k$ and $a_3^k$ 
are still making significant contributions when $p_k \approx 10^{12}$.

\section{Observations and conclusions}
We recall that these ratios apply to the gaps 
in the cycle of gaps $\pgap(\primeprod{p})$.  
These ratios are representative of the gaps that will survive to become
gaps between prime numbers \cite{FBHgaps, HRest}, but they are not direct calculations
of the gaps among primes.

To calculate the ratio $\Rat{1}{g}(p_k)$, which gives the relative number of occurrences of the 
gap $g$ to the gap $2$ at the stage of Eratosthenes sieve for $p_k$, 
we only need the parameters 
$\beta_{1j}^{(k)}$ from the top row of $M_J^k$, and the initial values $\Rat{j}{g}(p_0)$.
\begin{equation*}
\Rat{1}{g}(p_k)  =   \Rat{1}{g}(p_0) + \sum_{j=2}^{J} \beta_{1j}^{(k)} \cdot \Rat{j}{g}(p_0).
\end{equation*}
Given the simple eigenstructure of $M_J$, we can compute the $\beta_{1j}^{(k)}$ 
from $M^k = R \Lambda^k L$.

Brent \cite{Brent} computed the Hardy and Littlewood estimates
\cite{HL} for the occurrences of gaps among primes for gaps $g=2,\; 4, \ldots, 80$,
in the range $10^6$ to $10^9$.  
In the table below, we compare the actual ratios of the
occurrences of the gaps from $4\ldots 32$ to the occurrences of the gap $2$; to the ratios
in the predictions as computed by Brent; to the ratios of occurrences in the cycle of gaps
$\pgap(\primeprod{45053})$ -- we chose this prime as a representative whose square is
approximately $2\times 10^9$; to the ratios in the cycle of gaps for $\hat{p}=999,999,999,989$;
and to the asymptotic value.

\begin{center}
\begin{tabular}{|rrrr||r|rl|} \hline
\multicolumn{4}{|c|}{Counts and ests over $[10^6,10^9]$} & 
\multicolumn{3}{c|}{Ratios in $\pgap(\primeprod{p})$} \\ \hline
gap & actual & actual & Brent-HL & $w_{g,1}(45053)$ & $w_{g,1}(\hat{p})$ & $w_{g,1}(\infty)$ \\
 & count & ratio-to-2 & ratios & & & \\ \hline
2 & 3416337 & & & & & \\
4 & 3416536 & 1.000058 & 1.000000 & 1.000000 & 1.000000 & 1 \\
6 & 6076242 & 1.778584 & 1.778548 & 1.773251& 1.912023 & 2 \\
8 & 2689540 & 0.787258 & 0.786805 & 0.781874 & 0.913321 & 1 \\
10 & 3477688 & 1.017958 & 1.017669 & 1.010457 & 1.203964  & 1.3333 \\
12 & 4460952 & 1.305770 & 1.305407 & 1.290409 & 1.704932 & 2 \\
14 & 2460332 & 0.720167 & 0.720315 & 0.710307 & 0.991980 & 1.2 \\
16 & 1843216 & 0.539530 & 0.539307 & 0.530094 & 0.795251 & 1 \\
18 & 3346123 & 0.979448 & 0.979564 & 0.959984 & 1.536000 & 2 \\
20 & 1821641 & 0.533215 & 0.533624 & 0.519616 & 0.952118 & 1.3333 \\
22 & 1567507 & 0.458827 & 0.458646 & 0.447082 & 0.801923 & 1.1111 \\
24 & 2364792 & 0.692201 & 0.691456 & 0.670242 & 1.352488 & 2 \\
26 & 1118410 & 0.327371 & 0.327304 & 0.315738 & 0.701375 & 1.0909 \\
28 & 1218009 & 0.356525 & 0.356576 & 0.343838 & 0.769263  & 1.2 \\
30 & 2176077 & 0.636962 & 0.636843 & 0.609471& 1.580455 & 2.6667 \\
32 & 683346 & 0.200023 & 0.199842 & 0.190052  & 0.555727 & 1 \\ \hline
\end{tabular}
\end{center}

The values $w_{g,1}$ are the actual ratios between the numbers of these gaps at the corresponding
stage of Eratosthenes sieve.  So these ratios, when computed exactly, represent the exact proportions
of the relative occurrences among these gaps.

If there are significant deviations from these ratios among gaps in the cycle compared to the ratios of
those that survive to be gaps among primes over this range, what can we understand 
about the mechanism that would selectively close gaps of certain values?  

This column $\Rat{1}{g}(\hat{p})$ provides the ratios in $\pgap(\primeprod{999,999,999,989})$, 
which covers the interval $\hat{p}=999,999,999,989$ to 
$\primeprod{\hat{p}} \approx 10^{434294060804}$.
As the recursion continues, many closures will occur within this range.  
The final column $\Rat{1}{g}(\infty)$ provides the asymptotic ratios of the occurrences of the
indicated gap to the occurrences of the gap $2$.  

To understand the convergence to $\Rat{1}{g}(\infty)$, from the eigenstructure of $M_J^k$
we can approximate $\Rat{1}{g}(p_k)$ by truncating:
$$
\Rat{1}{g}(p_k) \approx 1\cdot \sum_{j=1}^{J} \Rat{j}{g}(p_0)
 - a_2^k \cdot \sum_{j=2}^{J} (j-1)\Rat{j}{g}(p_0) 
 + a_3^k \cdot \sum_{j=3}^{J} \binom{j-1}{2}\Rat{j}{g}(p_0) \ldots
 $$
Note that for $\pgap(\primeprod{999,999,999,989})$
the value of $a_2^k \approx 0.1$, so the convergence to $0$ is very gradual.

This work supports the conjecture that $30$ eventually is a more common gap among primes than $6$.
In the table above, we see that asymptotically there are in the cycles of gaps for Eratosthenes sieve
twice as many $6$'s as $2$'s and $2\frac{2}{3}$ times as many $30$'s as $2$'s.  
However, even at the prime
$\hat{p} \approx 10^{12}$, these ratios are $\Rat{1}{6}(\hat{p})= 1.91202$ and
 $\Rat{1}{30}(\hat{p})=1.580455$.
 Truncating $\Rat{1}{g}(p_k)$ as suggested and using the initial conditions for $g=6$ and $g=30$ 
 in $\pgap(\primeprod{13})$, we see that $30$'s will outnumber $6$'s in Eratosthenes sieve 
 when $a_2^k < 0.07$.
 
 The asymptotic ratios appear to follow the formula:
\begin{equation*}
 \Rat{1}{g}(\infty) = \prod_{q | g, \gap q>2} \frac{q-1}{q-2}.
 \end{equation*}
 It would be interesting to see whether this formula holds up for larger gaps, since it provides supporting
 evidence for Conjecture B in \cite{HL}; these ratios among gaps hold asymptotically in Eratosthenes sieve.  
 From Lemma~\ref{AsymLemma} this means that for a given set of prime factors (no
 matter what the powers on these factors), any gap with this same set of prime factors has the
 same total number of driving terms in any stage of Eratosthenes sieve that satisfies the conditions
 of Theorem~\ref{CountThm}.

One more observation about primorial gaps and their driving terms.
Since the length of $\pgap(\primeprod{5})$ is $8$ with sum $30$, in all subsequent cycles of gaps
the sum of every constellation of length $8$ will be at least $30$.  Since $n_{30,8}(\primeprod{13})=90$, there are $90$ complete copies of $\pgap(\primeprod{5})$ in $\pgap(\primeprod{13})$.  Complete copies are only preserved for $\pgap(\primeprod{5})$, $\pgap(\primeprod{3})$, and $\pgap(\primeprod{2})$.
These are preserved since the elementwise products in step R3 of the recursion are large enough to
pass completely over one of the copies concatenated in step R2.  Starting with $\pgap(\primeprod{7})$, 
the primorial $\primeprod{7}$ is larger than any of the elementwise products, and no complete copies of these longer cycles are preserved in their entirety.


\bibliographystyle{amsplain}

\providecommand{\bysame}{\leavevmode\hbox to3em{\hrulefill}\thinspace}
\providecommand{\MR}{\relax\ifhmode\unskip\space\fi MR }
\providecommand{\MRhref}[2]{%
  \href{http://www.ams.org/mathscinet-getitem?mr=#1}{#2}
}
\providecommand{\href}[2]{#2}

\end{document}